\newtheorem{lemma}{Lema}[chapter]
\newtheorem{theorem}[lemma]{Teorema}
\newtheorem{remark}{Observaci\'on}
\newtheorem{notation}{Notaci\'on}
\newtheorem{corollary}[lemma]{Corolario}
\newtheorem{definition}[lemma]{Definici\'on}
\begin{document}
\section{Orbits of $\mathbb{Z}_N ^k$ under the action of $S_k$ } \label{C:oua}

In this Section we follow Saldarriaga \cite{S}, see also \cite{FW}. Let $G$ be the group $\mathbb{Z}_N ^k$, and for each $N$-tuple of
nonnegative integers
\[ (a_0,a_1,...,a_{N-1}) \qquad \text{ such that} \qquad a_0+a_1+...+a_{N-1}=k \]
we define the subset of $G$
\begin{equation} \label{E:Part}
[(a_0,a_1,...,a_{N-1})]=\left\{ x\in\mathbb{Z}_N ^k \mid j \text{ occurs }  a_j \text{ times in } x \text{, }
0\leq j\leq N-1 \right\}.
\end{equation}
Then $G$ is a disjoint union of these subsets.

Note that the symmetric group $S_k$ acts on $G$ by permuting $k$-tuples, the \textbf{set of orbits under this action},
$\mathcal{O}=\mathcal{O}(N,k)$,  consists of the subsets \eqref{E:Part} defined above, and each orbit contains a unique
representative in \textbf{standard form}
\begin{equation} \label{E:orb}
 \left( (N-1)^{a_{N-1}},...,1^{a_1},0^{a_0}\right)
\end{equation}
where the exponent indicates the number of repetitions of the base.

\begin{notation}
Given $x\in \mathbb{Z}_N ^k$, we will denote the \textbf{orbit of x} by $[x]$ and the \textbf{representative in standard
form of this orbit} will be denoted by ${\hat x}$.
\end{notation}

For orbits $[a]$,$[b]$ and $[c]$, we define the set:
\[ T([a],[b],[c])=\{(x,y,z)\in [a]\times [b]\times [c] \mid x+y=z \}. \]
Note that $\sigma\in S_k$ acts on $(x,y,z)\in T([a],[b],[c])$ by $\sigma(x,y,z)=(\sigma x,\sigma y,\sigma z)$.
\begin{definition}
Denote by $\mathbf{M_{[a],[b]}^{(k)[c]}}$\textbf{ the number of }$\mathbf{S_k}$-\textbf{orbits of }$\mathbf{T([a],[b],[c])}$.
\end{definition}

This suggests that we could use these numbers as the structure constants of an algebra
(depending on $N$ and $k$ and over any field of characteristic 0) with basis $\mathcal{O}$, by defining a bilinear product
as follows:
\begin{equation}  \label{E:orbprod}
[a]\times [b]=\sum_{c\in \mathcal{O}}M_{[a],[b]}^{(k)[c]} [c].
\end{equation}

The following is a description of how to compute the product of two $S_k$-orbits of $\mathbb{Z}_N ^k$.

\begin{definition}
Let $[a]$, $[b]\in\mathcal{O}$, and assume that $[b]=\{y_1,...,y_t \}$. For $1\leq i\leq t$ set:
\begin{equation} \label{E:list}
 z_i=\hat a +y_i .
\end{equation}
We say that the equation $z_j=\hat a +y_j$ in the list \eqref{E:list} is \textbf{redundant}, if for some $i<j$ and some
$\sigma\in S_k$ we have
\[ \sigma\hat{a} =\hat{a} \text{, }\sigma y_j=y_i\text{ and } \sigma z_j=z_i ,\]
that is, if the triples $(\hat{a} ,y_i,z_i)$ and $(\hat{a} ,y_j,z_j)$ are in the same $S_k$-orbit of $T([a],[b],[z_i])$.
\end{definition}

Then the product of two orbits can be computed as follows: let $[a]$, $[b]\in\mathcal{O}$ and fix a representative
from the orbit $[a]$, say the representative in standard form, $\hat a$, and assume that $[b]=\{y_1,...,y_t \}$. For every
$y_i\in [b]$, set
\[ z_i=\hat a +y_i , \qquad 1\leq i\leq t . \]
Remove all redundant equations from this list, and without loss of generality, assume that after removing all redundancies,
we are left with the first $s$ equations, for some $s\leq t$. That is, the list:
\[ z_i=\hat a +y_i , \qquad 1\leq i\leq s,  \]
has no redundancies. Then
\[ [a]\times [b]=[z_1]+[z_2]+...+[z_s] . \]
Note that several $z_i$'s could be in the same orbit, and for every $[c]\in\mathcal{O}$,
\begin{equation} \label{E:M1}
 M_{[a],[b]}^{(k)[c]}=Card\{1\leq i\leq s \mid z_i\in [c] \} .
\end{equation}

\begin{remark} \label{R:M_[a],[b]^[c]}
From Equation \eqref{E:M1}, we also get that $M_{[a],[b]}^{(k)[c]}$ can be computed by removing all redundancies from the
list of equations
\[  z=\hat a +y, \]
where $y\in[b]$ and $z\in[c]$.
\end{remark}

The following Theorem was proved in \cite{S}

\begin{theorem} \label{T:orbitproduct}
Let $[a],[c]\in\mathcal{O}$ and $[b]=[(1^m,0^{k-m})]$ for some $m\leq k$. Suppose that $M_{[a],[b]}^{(k)[c]}\ne 0$ then
$M_{[a],[b]}^{(k)[c]}=1$.
\end{theorem}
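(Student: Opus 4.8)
The plan is to turn the computation of $M_{[a],[b]}^{(k)[c]}$ into a count of integer solutions of a small cyclic linear system, and then show that system has at most one solution; since the theorem asserts that a nonzero value equals $1$, it suffices to prove the sharper statement $M_{[a],[b]}^{(k)[c]}\le 1$ unconditionally.

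First I would invoke Remark~\ref{R:M_[a],[b]^[c]} together with the fact that $S_k$ acts transitively on $[a]$: every $S_k$-orbit in $T([a],[b],[c])$ has a representative of the form $(\hat a,y,z)$ with $y\in[b]$ and $z=\hat a+y\in[c]$, and two such representatives $(\hat a,y,z)$ and $(\hat a,y',z')$ lie in the same orbit if and only if $\sigma y=y'$ for some $\sigma\in\mathrm{Stab}(\hat a)$ — the condition $\sigma z=z'$ being then automatic because $z=\hat a+y$ and $\sigma\hat a=\hat a$. Hence $M_{[a],[b]}^{(k)[c]}$ is exactly the number of $\mathrm{Stab}(\hat a)$-orbits on the set $\{\,y\in[b]\mid \hat a+y\in[c]\,\}$.

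Next, writing $\hat a=((N-1)^{a_{N-1}},\dots,1^{a_1},0^{a_0})$, I split the $k$ coordinates into blocks $B_{N-1},\dots,B_0$, where $B_j$ is the set of $a_j$ coordinates on which $\hat a$ equals $j$, so that $\mathrm{Stab}(\hat a)=S_{a_{N-1}}\times\cdots\times S_{a_0}$ acting blockwise. Since $[b]=[(1^m,0^{k-m})]$, a vector $y\in[b]$ is determined by its support $\mathrm{supp}(y)$, and because each factor $S_{a_j}$ acts transitively on the $s_j$-element subsets of $B_j$, the $\mathrm{Stab}(\hat a)$-orbit of $y$ is determined precisely by the profile $(s_0,\dots,s_{N-1})$ with $s_j:=|\mathrm{supp}(y)\cap B_j|$. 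Such a profile occurs for some $y\in[b]$ exactly when $0\le s_j\le a_j$ for all $j$ and $\sum_j s_j=m$. Finally, computing the multiplicities of $z=\hat a+y$ block by block — on $B_j$ the $s_j$ support coordinates become $j+1 \bmod N$ and the other $a_j-s_j$ stay at $j$ — shows that, writing $c_v$ for the multiplicity of $v$ in $\hat c$, one has $z\in[c]$ if and only if $c_v=a_v-s_v+s_{v-1}$ for every $v\in\mathbb{Z}_N$ (indices mod $N$). Thus $M_{[a],[b]}^{(k)[c]}$ equals the number of integer tuples $(s_0,\dots,s_{N-1})$ meeting these constraints.

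It then remains to show that the system $s_v-s_{v-1}=a_v-c_v$ for $v\in\mathbb{Z}_N$, together with $\sum_j s_j=m$, has at most one solution over $\mathbb{Q}$. The cyclic difference equations are consistent (their sum reads $0=\sum_v(a_v-c_v)$, which holds since $\sum_v a_v=k=\sum_v c_v$) and their solution set is the affine line $s_j=s_0+\sum_{1\le i\le j}(a_i-c_i)$; the extra equation $\sum_j s_j=m$ is an affine constraint whose linear part is $N\,s_0$ with $N\ne 0$, so it pins down $s_0$, hence the whole tuple. This gives $M_{[a],[b]}^{(k)[c]}\le 1$, which proves the theorem. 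The only step demanding real care is the blockwise multiplicity bookkeeping for $z=\hat a+y$, especially the wrap-around contribution of $B_{N-1}$ (where the value $N-1$ is sent to $0$); once that identity $c_v=a_v-s_v+s_{v-1}$ is established correctly, the remainder is formal linear algebra.
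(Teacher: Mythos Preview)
Your argument is correct. Note, however, that the paper does not actually prove Theorem~\ref{T:orbitproduct}: it is quoted from \cite{S}, so there is no in-paper proof to compare against.

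On its own merits your proof is clean and complete. The reduction via Remark~\ref{R:M_[a],[b]^[c]} to counting $\mathrm{Stab}(\hat a)$-orbits on $\{y\in[b]\mid \hat a+y\in[c]\}$ is correct, and the key observation---that because $[b]=[(1^m,0^{k-m})]$ a vector $y\in[b]$ is just the indicator of an $m$-subset, so its $\mathrm{Stab}(\hat a)=\prod_j S_{a_j}$-orbit is encoded by the intersection profile $(s_0,\dots,s_{N-1})$---is exactly the right parametrisation. The blockwise bookkeeping giving $c_v=a_v-s_v+s_{v-1}$ (indices mod $N$) is accurate, including the wrap-around from $B_{N-1}$ into the value $0$. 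The resulting cyclic first-difference system has a one-parameter affine solution set, and the constraint $\sum_j s_j=m$ has linear part $N s_0$, hence determines $s_0$ uniquely over $\mathbb{Q}$; a fortiori there is at most one integer solution satisfying $0\le s_j\le a_j$. This yields $M_{[a],[b]}^{(k)[c]}\le 1$, which is the desired conclusion.

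One cosmetic remark: you phrase the uniqueness as ``at most one solution over $\mathbb{Q}$'', which is fine since integer solutions are a subset; but it is worth saying explicitly that the inequalities $0\le s_j\le a_j$ play no role in the uniqueness argument (they only affect whether the unique rational solution is admissible, i.e.\ whether $M=0$ or $M=1$).
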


\begin{notation} If $a=(a_1,\dots,a_k)\in\mathbb{Z}_N^k$, we will denote for $(a,0)$ the $k+1-$tuple $(a_1,\dots,a_k,0)\in\mathbb{Z}_N^{k+1}$ and for $[a,0]$ its corresponding orbit.
\end{notation}

\begin{theorem} \label{T:levelincreasingfororbits} Let $a,b,c\in\mathbb{Z}_N^k$ with $b$ so that $\hat{b}=(1^m,0^{k-m})$ then $M_{[a],[b]}^{(k)[c]}\leq M_{[a,0],[b,0]}^{(k+1)[c,0]}$
\end{theorem}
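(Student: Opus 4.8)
The plan is to invoke Theorem~\ref{T:orbitproduct} to collapse all the multiplicities occurring in the statement to the set $\{0,1\}$, after which the inequality becomes a triviality about non-emptiness. The first thing I would check is that appending a zero coordinate behaves well with respect to orbits and preserves the hypothesis on $b$. If $a'=\sigma a$ with $\sigma\in S_k$, let $\tilde\sigma\in S_{k+1}$ be the extension of $\sigma$ fixing the last coordinate; then $(a',0)=\tilde\sigma(a,0)$, so the $S_{k+1}$-orbit $[a,0]$ depends only on $[a]$, and similarly for $[b,0]$ and $[c,0]$. Moreover, adjoining a $0$ to $b$ only raises the number of occurrences of $0$, so $\widehat{(b,0)}=(1^m,0^{(k+1)-m})$; in particular $[b,0]$ is again an orbit of the form allowed in Theorem~\ref{T:orbitproduct}, now over $\mathbb{Z}_N^{k+1}$ and with the same $m\le k<k+1$.

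Since $[b]$ and $[b,0]$ both have the required special form, Theorem~\ref{T:orbitproduct} applies to both sides, so each of $M_{[a],[b]}^{(k)[c]}$ and $M_{[a,0],[b,0]}^{(k+1)[c,0]}$ lies in $\{0,1\}$. Hence the claimed inequality is equivalent to the implication ``$M_{[a],[b]}^{(k)[c]}\neq 0 \Rightarrow M_{[a,0],[b,0]}^{(k+1)[c,0]}\neq 0$''. Because $M_{[a],[b]}^{(k)[c]}$ is by definition the number of $S_k$-orbits of $T([a],[b],[c])$, its non-vanishing just says $T([a],[b],[c])\neq\emptyset$, and likewise on the right; so it suffices to show that $T([a],[b],[c])\neq\emptyset$ forces $T([a,0],[b,0],[c,0])\neq\emptyset$. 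This is immediate: given $(x,y,z)\in T([a],[b],[c])$, i.e.\ $x\in[a]$, $y\in[b]$, $z\in[c]$ with $x+y=z$ in $\mathbb{Z}_N^k$, the triple $\big((x,0),(y,0),(z,0)\big)$ satisfies $(x,0)+(y,0)=(z,0)$ in $\mathbb{Z}_N^{k+1}$ and, by the previous paragraph, has its three components in $[a,0]$, $[b,0]$, $[c,0]$ respectively, hence lies in $T([a,0],[b,0],[c,0])$. This proves the theorem.

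I do not expect a real obstacle in the statement as given: once Theorem~\ref{T:orbitproduct} has flattened the multiplicities, everything is forced, and the only points worth an explicit line are the well-definedness of $[a,0],[b,0],[c,0]$ and the fact that $\widehat{(b,0)}$ retains its special shape so that Theorem~\ref{T:orbitproduct} can be used on the right-hand side. The place where one would actually have to work is if the hypothesis on $b$ were dropped, since then Theorem~\ref{T:orbitproduct} is unavailable and one must compare the multiplicities directly. The natural route there is to show that $[(x,y,z)]_{S_k}\mapsto\big[\,((x,0),(y,0),(z,0))\,\big]_{S_{k+1}}$ is a well-defined \emph{injection} from $S_k$-orbits of $T([a],[b],[c])$ to $S_{k+1}$-orbits of $T([a,0],[b,0],[c,0])$: well-definedness is the extension argument above, and injectivity holds because if some $\tau\in S_{k+1}$ carries one padded triple to another, then the value $0$ in the last coordinate of $(x,0),(y,0),(z,0)$ forces all three of them to carry a $0$ in the position $\tau^{-1}(k+1)$, so replacing $\tau$ by its composition with the transposition of $k+1$ and $\tau^{-1}(k+1)$ changes nothing on the padded triples while yielding a permutation that fixes $k+1$, i.e.\ (the extension of) an element of $S_k$ witnessing that the two original triples lie in the same $S_k$-orbit. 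This stronger version is not needed here, so I would present only the short argument above.
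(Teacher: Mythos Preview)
Your proof is correct and follows essentially the same route as the paper's: invoke Theorem~\ref{T:orbitproduct} to force the multiplicities into $\{0,1\}$, then pad a witnessing triple with a zero coordinate to pass from $T([a],[b],[c])$ to $T([a,0],[b,0],[c,0])$. The paper applies Theorem~\ref{T:orbitproduct} only on the left (so the right-hand side merely needs to be $\ge 1$) and omits your final paragraph's injection argument for general $b$, which is correct but not needed for the statement at hand.
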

\begin{proof} It is clear that $M_{[a],[b]}^{(k)[c]}$ is a non-negative number, hence if $M_{[a],[b]}^{(k)[c]}=0$ the result follows.

Now, assume that $M_{[a],[b]}^{(k)[c]}\ne0$, then from Theorem \ref{T:orbitproduct} we get that $M_{[a],[b]}^{(k)[c]}=1$. Hence from Remark \ref{R:M_[a],[b]^[c]} we get that there exists $y\in[b]$ and $z\in[c]$ so that $\hat{a}+y=z,$
therefore we get that
\begin{equation}\label{E:extension}
(\hat{a},0)+(y,0)=(z,0).
\end{equation}

It is clear that $\hat{(a,0)}=(\hat(a),0)$, $(\hat(a),0)\in[a,0]$, $(y,0)\in[y,0]$ and $(z,0)\in[z,0]$. Hence, from Equation \eqref{E:extension} and Remark \ref{R:M_[a],[b]^[c]} we get that $M_{[a,0],[b,0]}^{(k+1)[c,0]}\ne0$ and then the result follows.
\end{proof}

\section{Particular case of the level increasing conjecture}

Let $\lambda=a_1\lambda_1+\cdots+a_{N-1}\lambda_{N-1}$ a weight for $A_{N-1}$ of level $k$, hence $a_1+\cdots+a_{N-1}\leq k$, then obviously $a_1+\cdots+a_{N-1}\leq k+1$ and $\lambda$ can also be considered as a weight of level $k+1$. Notice that the corresponding orbit $[\lambda]$ in $\mathbb{Z}_N^{k}$ associated to the weight $\lambda$ is given by

$[\lambda]=\left[\left((N-1)^{a_{N-1}},(N-2)^{a_{N-2}},\dots,1^{a_1},0^{k-\sum_{i=1}^{N-1}a_i}\right)\right]$ and if we see $\lambda$ as a weight of level $k+1$ then its corresponding orbit is
\[ \left[\left((N-1)^{a_{N-1}},(N-2)^{a_{N-2}},\dots,1^{a_1},0^{k+1-\sum_{i=1}^{N-1}a_i}\right)\right]=[\lambda,0]\]
In other words, if $[a]$ is the corresponding orbit to $\lambda$ in $\mathbb{Z}_N^k$, then $[a,0]$ is the corresponding orbit in $\mathbb{Z}_N^{k+1}$ of $\lambda$ seen as a weight of level $k+1$.

The following theorem was proved in \cite{S}

\begin{theorem} \label{T:main}
Let $\lambda=m\lambda_1$ be a multiple of the first fundamental weight for $A_{N-1}$, $m\leq k$,
$\mu=a_1\lambda_1+...+a_{N-1}\lambda_{N-1}$ any other weight of level $k$ and $[\lambda]$ and $[\mu]$ their
corresponding orbits in $\mathbb{Z}_N ^k$. Then $N_{\mu,\lambda}^{(k)\nu}=M_{[\mu],[\lambda]}^{(k)[\nu]}$ for any weight
$\nu$ of level $k$.
\end{theorem}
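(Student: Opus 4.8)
The plan is to make each side of the asserted equality explicit as a Pieri-type rule and then check that the two rules describe the same combinatorial move. Recall the correspondence fixed above: a level-$k$ weight $\mu=\sum_{i\ge1}a_i\lambda_i$ is sent to the orbit of $\hat\mu=\bigl((N-1)^{a_{N-1}},\dots,1^{a_1},0^{a_0}\bigr)$ with $a_0=k-\sum_{i\ge1}a_i$, and this tuple is exactly the list of column heights of the Young diagram $D_\mu$ of $\mu$, padded with zeros to length $k$, i.e. the conjugate partition $\mu'$, which fits in the $k\times(N-1)$ box. Since here $[\lambda]=[(1^m,0^{k-m})]$, Theorem~\ref{T:orbitproduct} already gives $M_{[\mu],[\lambda]}^{(k)[\nu]}\in\{0,1\}$, so it suffices to prove that $N_{\mu,\lambda}^{(k)\nu}$ is also always $0$ or $1$ and that the two vanish simultaneously, i.e. that $N_{\mu,\lambda}^{(k)\nu}\ne0\iff M_{[\mu],[\lambda]}^{(k)[\nu]}\ne0$.

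First I would unwind the orbit side. By Remark~\ref{R:M_[a],[b]^[c]}, $M_{[\mu],[\lambda]}^{(k)[\nu]}\ne0$ iff there is a $\{0,1\}$-vector $y\in\mathbb{Z}_N^k$ with exactly $m$ ones such that $\hat\mu+y\pmod N$ lies in $[\nu]$. Adding $y$ amounts to incrementing $m$ of the $k$ entries of $\hat\mu$, with an entry equal to $N-1$ passing to $0$. Using the stabiliser of $\hat\mu$ in $S_k$ one may normalise $y$ so that, inside each block of equal entries of $\hat\mu$, the incremented positions form an initial segment; this is the same redundancy reduction that underlies Theorem~\ref{T:orbitproduct}, and after re-sorting it exhibits $\hat\mu+y$ as obtained from $\hat\mu=\mu'$ by adding a vertical $m$-strip (equivalently, $D_\mu$ grows by a horizontal $m$-strip), \emph{keeping the tuple at length $k$} and deleting any part that reaches value $N$ (a full column of height $N$, removed in the $\mathfrak{sl}_N$-picture). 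Thus $M_{[\mu],[\lambda]}^{(k)[\nu]}\ne0$ exactly when $\nu$ is the $\mathfrak{sl}_N$-reduction of a diagram $D_{\nu''}$ of width $\le k$ obtained from $D_\mu$ by a horizontal $m$-strip.

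Next I would produce the matching description of the fusion coefficient. From the classical Pieri rule $V_\mu\otimes\operatorname{Sym}^m(\mathbb{C}^N)=\bigoplus_{\nu''}V_{\nu''}$, summed over $\nu''$ with $\nu''/\mu$ a horizontal $m$-strip, I would feed the $\nu''$ into the Kac--Walton formula $N_{\mu,m\lambda_1}^{(k)\nu}=\sum_{w\in\widehat W}\varepsilon(w)\,\mathcal N_{\mu,m\lambda_1}^{\,w\cdot\nu}$. The constituents $\nu''$ whose diagram already has width $\le k$ contribute $+1$ to the corresponding weight. Each remaining $\nu''$ has width $>k$; one checks that such a $\nu''$ is either fixed by an affine reflection (hence on a wall, contributing $0$) or is moved by the affine Weyl group onto a width-$\le k$ weight with the opposite sign, and that these leftover contributions cancel in pairs, so that only the width-$\le k$ constituents survive, each with coefficient $1$. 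In particular $N_{\mu,m\lambda_1}^{(k)\nu}\in\{0,1\}$, and it is nonzero under exactly the condition found on the orbit side. (Alternatively one may quote the cylindric-tableau form of the level-$k$ fusion rule for $A_{N-1}$, or the explicit rule for multiples of a fundamental weight from \cite{S}, and skip this bookkeeping.)

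Putting the two descriptions together gives $N_{\mu,\lambda}^{(k)\nu}=M_{[\mu],[\lambda]}^{(k)[\nu]}$ for every level-$k$ weight $\nu$: both equal the indicator of the width-$\le k$ horizontal-strip move from $\mu$ to $\nu$, so the theorem follows. The step I expect to be the real obstacle is the claim inside the third paragraph --- that the Kac--Walton corrections for the factor $\operatorname{Sym}^m$ annihilate precisely the contributions of the width-$>k$ constituents and create no spurious ones; phrased geometrically, that the affine (width) truncation of the fusion ring is implemented on the orbit side merely by keeping tuples at length $k$, while the $\mathfrak{sl}_N$-reduction is implemented by reading coordinates modulo $N$. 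A smaller but necessary point is the normalisation of $y$ in the second paragraph, which can be imported from the proof of Theorem~\ref{T:orbitproduct}.
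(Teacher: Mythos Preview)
The present paper does not contain a proof of Theorem~\ref{T:main}; the result is simply quoted from \cite{S}, so there is no argument here to compare yours against directly.

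On your proposal itself: the orbit-side analysis is correct. Adding a $\{0,1\}$-vector with $m$ ones to $\hat\mu$ modulo $N$ is exactly ``add a horizontal $m$-strip to the diagram of $\mu$, keep at most $k$ columns, then delete any column of height $N$,'' and the stabiliser argument behind Theorem~\ref{T:orbitproduct} shows each such strip is counted once. The real content, as you yourself flag, is the fusion-side claim that in the Kac--Walton alternating sum for $V_\mu\otimes\mathrm{Sym}^m$ the horizontal-strip constituents $\nu''$ of width $>k$ contribute net zero while those of width $\le k$ survive with coefficient $+1$. This is true but it is not the routine check your phrase ``one checks'' suggests. Already for $N=2$, $k=2$, $\mu=2\lambda_1$, $m=2$, the width-$3$ constituent $(3,1)$ is \emph{not} on an affine wall (its $\mathfrak{sl}_2$-weight $2\lambda_1$ lies inside the alcove); it disappears only because the width-$4$ constituent $(4)=4\lambda_1$ reflects onto it with sign $-1$. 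A complete argument along your lines therefore needs an explicit sign-reversing involution on the width-$>k$ constituents, induced by the shifted affine reflection, together with a verification that this involution never touches a width-$\le k$ term. That statement is essentially the level-$k$ fusion Pieri rule for $A_{N-1}$; proving it is the entire difficulty, and your proposed shortcut of quoting it from \cite{S} is circular, since Theorem~\ref{T:main} is precisely the place in \cite{S} where that rule is established. For comparison, the route actually taken in \cite{S} is different from yours: both the fusion algebra and the orbit algebra are identified with the same quotient of a ring of symmetric polynomials, so the equality of structure constants comes from an algebra isomorphism rather than from a term-by-term Pieri match and the attendant Kac--Walton cancellation.
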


So we get the following special case of the Level Increasing Conjecture

\begin{corollary} \label{T:levelincreasingconjectureforrows} \textbf{(Special case of the Level Increasing Conjecture)} Let $\lambda, \mu$ and $\nu$ weights for $A_{N-1}$ of level $k$ with $\lambda=m\lambda_1$ a multiple of the first fundamental weight, then we get
\[ N_{\mu,\lambda}^{(k)\nu}\leq N_{\mu,\lambda}^{(k+1)\nu}.\]
\end{corollary}
\begin{proof} From Theorem \ref{T:main} we get $N_{\mu,\lambda}^{(k)\nu}=M_{[\mu],[\lambda]}^{(k)[\nu]}$ and from Theorem \ref{T:levelincreasingfororbits} we get that $M_{[\mu],[\lambda]}^{(k)[\nu]}\leq M_{[\mu,0],[\lambda,0]}^{(k+1)[\nu,0]}$ and since
$M_{[\mu,0],[\lambda,0]}^{(k+1)[\nu,0]}=N_{\mu,\lambda}^{(k+1)\nu}$ we get
\[ N_{\mu,\lambda}^{(k)\nu}\leq N_{\mu,\lambda}^{(k+1)\nu}.\]
\end{proof}


\begin{thebibliography}{9}

\bibitem{FW} A. Feingold and M. Weiner, \emph{Type A Fusion rules from elementary group theory},
Recent Developments in Infinite-Dimensional Lie Algebras and Conformal Field Theory,
Proceedings of an International Conference on Infinite-Dimensional Lie Theory and Conformal Field Theory,
May 23-27, 2000, University of Virginia, Charlottesville, Virginia, S. Berman, P. Fendley, Y.-Z. Huang,
K. Misra, B. Parshall, Editors, Comtemporary Mathematics {\bf 297},
American Mathematical Society, Providence,RI, 2002, 97--115.

\bibitem{S} Saldarriaga O., {\it Fusion Algebras, symmetric polynomials, and $S_k$-orbits of $\mathbb{Z}_N^k$}, Journal of Algebra, Vol 312 (1) 2007.

\end{thebibliography}
\end{document}